\documentclass{amsart}

\usepackage{url}

\usepackage{amscd}
\usepackage{amsfonts}
\usepackage{amssymb}
\usepackage{euscript}
\usepackage{amsmath}
\usepackage{amsthm}
\usepackage[matrix, arrow, curve]{xy}
\usepackage{mathrsfs}

\usepackage{dsfont}

\newcommand{\Li}{L_\infty}
\newcommand{\LL}{\mathop{L}}
\newcommand{\Alt}{\mathop{\mathrm{Alt}}}
\newcommand{\T}{T}
\newcommand{\TT}{{\mathrm T}}
\newcommand{\F}{F}
\newcommand{\R}{{{\mathbb R}^n}}
\newcommand{\Det}{\mathop{\mathrm{Det}}}
\newcommand{\sgn}{\mathrm{sgn}}
\newcommand{\C}{\mathscr{C}}
\newcommand{\g}{\mathfrak g}
\newcommand{\cc}{c}
\newcommand{\dtot}{\mathrm{d_{tot}}}
\newcommand{\Dn}{\mathop{\mathfrak D}\nolimits^n}
\newcommand{\Ue}{\mathop{U}\nolimits}
\newcommand{\ii}{\imath}
\newcommand{\Ab}{\mathrm{Ab}}
\newcommand{\set}[1]{\mathbf{[#1]}}

\theoremstyle{plain}
\newtheorem{prop}{Proposition}
\newtheorem{stat}{Statement}

\theoremstyle{definition}
\newtheorem{definition}{Definition}

\theoremstyle{remark}

\begin{document}

\title{Manifoldic homology and Chern-Simons formalism}
\author{Nikita Markarian}

\address{National Research University Higher School of Economics,
Department of Mathematics, 20 Myasnitskaya str., 101000, Moscow,
Russia}

\email{nikita.markarian@gmail.com}

\thanks{This work has been partially supported 
AG Laboratory GU-HSE, RF government grant, ag.~11.G34.31.0023 and the National Research University
Higher School of Economics' Academic Fund Program in 2012-2013,
research grant No.~11-01-0145, RFBR--12-01-00944.
}

\date{}

\maketitle

\section*{Introduction}

Manifoldic homology (we suggest this term instead of ``topological
chiral homology with constant coefficients'' from \cite{Lu}) is a
far-reaching generalization of  Hochschild homology. In the theory of
Hochschild and cyclic homology the additive Dennis trace map (e.~g. \cite[8.4.16]{Lo}) plays an
important role. Let $A$ be an associative algebra. Denote by
$\LL(A)$ the underlying Lie algebra of $A$. Then the additive  Dennis trace gives a  map from
$H_*(\LL(A))$ to $HH_{*}(A)$. The aim
of the present note is to generalize this morphism  to  any $e_n$-algebra
$A$.

Let $e_n$ be the operad of rational chains of the operad of little discs and $A$
be an algebra over it. The complex $A[n-1]$ is equipped with homotopy
Lie algebra structure, denote it by $\LL(A)$. Fix a compact oriented
$n$-manifold $M$  without boundary. For simplicity we restrict
ourselves to parallelizable manifolds, this restriction may be
removed by introducing framed little discs as in \cite{PS}. Denote
by $HM_*(A)$ the manifoldic homology of $A$ on $M$ introduced in Definition \ref{1} below,
and by $H_*(\LL(A))$ the Lie algebra  homology. In  the central
Proposition \ref{main} we give a morphism $H_*(\LL(A))\to
HM_*(A)$ explicitly in terms of the Fulton--MacPherson operad.

For $n=1$ and $M=S^1$, that is for homotopy associative algebras and  Hochschild homology, 
the above morphism may be presented
as the composition of natural morphisms $H_*(\LL(A))\to HH_*(\Ue(\LL(A))\to
HH_*(A)$, where $\Ue(-)$ is the universal enveloping algebra.
For $n>1$ the analogous statement holds, with the universal enveloping algebra replaced
by the universal enveloping $e_n$-algebra. The definition of the latter notion naturally appears in the context of
Koszul duality for $e_n$-algebras, which is still under construction, see nevertheless e.~g. \cite{Fr}
and references therein.
We need even more, than Koszul duality. The description of application of our construction to 
manifold invariants 
requires the Koszul duality for $e_n$-algebras with curvature.
These subjects are briefly discussed in the last section.

Our main construction is exemplary and may be generalized in many ways. 
For example,
one may take some modules over $A$ and put them into some points of $M$. 
Then one get a map from 
homology of $\LL(A)$
with coefficients in an appropriate module to the manifoldic homology with coefficients in these 
modules. 
In particular, if one take copies of $A$ itself as modules, then manifoldic homology with 
coefficient in them equals to the usual manifoldic homology; thus one get a map from homology of 
$\LL(A)$ 
with coefficients in the tensor product of adjoint modules to $HM_*(A)$. 
One needs this generalization to 
build a working theory of invariants of 3-manifolds,
I hope to treat this subject elsewhere.

\smallskip

{\bf Acknowledgments.} I am grateful to D.~Calaque, A.~Cattaneo,
G.~Ginot, A.~Khoroshkin and L.~Positselski for helpful discussions. My special
thanks to M.~Kapranov for  the inspiring discussion and  the term
``manifoldic homology''.

The present note is partially initiated by the work of K.~ Costello and O.~Gwilliam on factorization algebras in perturbative quantum field theory   (\cite{CG}), although it is hard to point at exact relations.  

\section{Trees and $\Li$}
\subsection{Trees}

A tree is an oriented connected graph with three type of vertices:
{\em root} has one incoming edge and no outgoing ones, {\em leaves}
have one outgoing edge and no incoming  ones and {\em internal
vertexes} have one outgoing edge and more than one incoming ones.
Edges incident to leaves will be called {\em inputs}, the edge incident
to the root will be called the {\em output} and all other edges will be called {\em internal
edges}. The degenerate tree has one edge and no internal vertexes.
Denote by $\T_k(S)$ the set of non-degenerate trees with $k$ internal
edges and leaves labeled by a set $S$.

For two trees $t_1\in T_{k_1}(S_1)$ and $t_2\in T_{k_2}(S_2)$ and an element $s\in S_1$
the composition of trees $t_1\circ_s t_2\in T_{k_1+k_2+1}$ is obtained by identification of the input of $t_1$
corresponding to $s$ and the output of $t_2$. Composition of trees is associative and
the degenerate tree is the unit.  The set of trees with respect to the composition forms an operad.

Call the tree with only one internal vertex the {\em star}.
Any  non-degenerate tree with $k$ internal edges may be uniquely presented as a composition of
$k+1$ stars.

The operation of {\em edge splitting} is the following: take
a non-degenerate tree, present it as a composition of stars and
replace one star  with a tree that is  a product of two stars and
has the same set of inputs . The operation of  an edge splitting
depends on a internal vertex and a proper subset of incoming edges.

\subsection{$\Li$}

For a non-degenerate tree $t$ denote by $\Det(t)$ the
one-dimensional $\mathbb Q$-vector space that is the determinant of
the vector space generated by internal edges. For $s>1$ consider the
complex
\begin{equation}
L(s)\colon \bigoplus_{t\in T_0(\set{s})} \Det (t) \to
\bigoplus_{t\in T_1(\set{s})} \Det(t) \to \bigoplus_{t\in
T_2(\set{s})} \Det (t) \to \cdots,
 \label{complex}
\end{equation}
where $\set{s}$ is the set of $s$ elements,
the cohomological degree of a tree $t\in T_k(\set{s})$ is $2-s+k$ and the  
differential is given by all possible splitting  of an edge (see e.~g.
\cite{GK}). The composition of trees equips the sequence
$L(i)\otimes \sgn$ with the structure of a $dg$-operad, here $\sgn$ is the
sign representation of the symmetric group.

This operad is called {\em $\Li$ operad}. Denote by $\Li[n]$ the
$dg$-operad given by the complex $L(s)[n(s-1)]\otimes(\sgn)^n$
and refer to it as $n$-shifted $L_\infty$ operad.

\section{Fulton--MacPherson operad}
\subsection{Fulton--MacPherson compactification}

The  Fulton--MacPherson compactification is introduced in \cite{GJ}
and \cite{Ma}, see also \cite{PS} and \cite{AS}. We cite here its
properties that are essential for our purposes.

For a finite set $S$ denote by $(\R)^S$ the set of ordered $S$-tuples in $\R$.
For a finite set $S$ denote by $\Delta_S\colon \R\to (\R)^S$ the diagonal embedding.
We will denote by $\set{n}$  the set of $n$ elements.

Let $\C^0_S(\R)\subset (\R)^S$ be the space of ordered pairwise distinct
points in $\R$ labeled by $S$. The {\em Fulton--MacPherson
compactification} $\C_S(\R)$ is a manifold with corners with interior
$\C_S^0(\R)$. The projection $\C_S(\R)\stackrel {\pi}{\to} (\R)^S$ is defined,
which is an isomorphism on  $\C^0_S(\R)\subset \C_S(\R)$. Moreover there is a
sequence of manifolds with corners $\F_n(S)$ labeled by finite sets and
maps $\phi_{S_1,\dots, S_k}$ that fit in the diagram
$$
\begin{CD}
\F_n(S_1)\times\cdots\times \F_n(S_k)\times \C_{\set{k}} (\R)@>\phi_{S_1,\dots, S_k}>> \C_{(S_1\cup\dots\cup S_k)}(\R)\\
@VV\pi V        @VV\pi V \\
(\R)^k @>{\Delta_{S_1}\times\cdots\times \Delta_{S_k}}>> (\R)^{(S_1\cup\dots\cup S_k)}
\end{CD}
$$
where  the left arrow is the projection to the point on the first factors
and $\pi$ on the last one. Restrictions of $\phi_{S_1,\dots, S_k}$
to $\F_n(S_1)\times\cdots\times \F_n(S_k)\times \C^0_{\set{k}}
(\R)$ are isomorphisms onto the image. It follows that
$\F_n(S)=\pi^{-1} \vec{0}$, where $\vec{0}\in(\R)^S$ is $S$-tuple
sitting at the origin. Being restricted on $\F_n(S)\subset \C_S(\R)$,
maps $\phi$ equip $\F_n(S)$ with an operad structure:
$$
\phi_{\set{s_1},\dots, \set{s_k}}\colon
\F_n(\set{s_1})\times\cdots\times \F_n(\set{s_k})\times \F_n(\set{k})
\to \F_n(\set{s_1+\dots+ s_k}).
$$
Manifolds $\C_{\set{k}}(\R)$ and $\F_n(\set{k})$ are equipped  with a $k$-th symmetric group action
consistent with its natural action on $\C_{\set{k}}^0(\R)$ and all maps  are compatible with this action.

\begin{definition}[\cite{PS}, \cite{GJ}, \cite{Ma}]  The sequence of spaces  $\F_n(\set{k})$
with the symmetric group action and composition morphisms as above is called the {\em
Fulton--MacPherson operad.}
\end{definition}

\subsection{Strata, trees  and $\Li$}

There is a map of sets $\F_n(S)\stackrel{\mu}{\to} T(S) $ that
subdivides $\F_n(S)$ into smooth strata. 
This map is totally defined by the following properties. Firstly, $\mu$ is consistent with the operad structure in
the sense that the  preimage of a composition is the composition of
preimages. Secondly, the zero codimension stratum corresponding to a
star tree is the  intersection of $\pi^{-1} \vec{0}$ and the stratum of 
$\C_{S}(\R)$ that is the blow-up of
the small diagonal minus pull backs of other diagonals.
These latter strata freely generate the Fulton--MacPherson operad as a set.

Denote by $C_*(\F_n)$ the $dg$-operad of rational chains of the Fulton--MacPherson operad.
For a tree $t\in\T(S)$ let $[\mu^{-1} (t)]\in C_*(\F_n(S))$ be the chain presented by its preimage under $\mu$.

\begin{prop}
Map $[\mu^{-1} (\cdot)]$ gives a morphism 
from shifted $L_\infty$ operad $L(s)[s(1-n)]$  
to the $dg$-operad $C_*(\F_n(\set{s}))$ of rational chains of the Fulton--MacPherson operad.
\label{tree}
\end{prop}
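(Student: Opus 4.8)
The plan is to verify that the assignment $t\mapsto [\mu^{-1}(t)]$ respects the three pieces of structure carried by $L(s)[s(1-n)]$: the symmetric group action, the operadic composition, and the differential (together with the degree convention). First I would fix the degree bookkeeping. A tree $t\in T_k(\mathbf{[s]})$ carries the vector space $\Det(t)$ generated by its $k$ internal edges; in $L(s)[s(1-n)]$ it sits in degree $2-s+k+s(1-n)=2+k-ns$. On the other side, the stratum $\mu^{-1}(t)\subset \F_n(\mathbf{[s]})$ is a smooth stratum whose codimension, by the defining property of $\mu$, equals the number of internal edges $k$; since $\dim \F_n(\mathbf{[s]})=ns-n-1$ (the Fulton--MacPherson space of $s$ points in $\mathbb R^n$ modulo translation and scaling), the chain $[\mu^{-1}(t)]$ has homological degree $ns-n-1-k$, i.e. cohomological degree $2+k-ns$ once we account for the operadic shift built into $C_*(\F_n)$ as it pairs with $L_\infty$-algebras on $A[n-1]$. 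The point is only that the two degree conventions match; I would record this as a short lemma. The determinant line $\Det(t)$ is accounted for by the orientation/coorientation of the stratum $\mu^{-1}(t)$: a coorientation of a codimension-$k$ stratum is the same as an ordering-up-to-sign of the $k$ normal directions, and each normal direction corresponds to ``un-collapsing'' one internal edge, so the normal bundle of $\mu^{-1}(t)$ is canonically $\Det(t)\otimes(\text{trivial})$ up to the sign twist $(\sgn)^n$ coming from the $\mathbb R^n$-factors. This is what forces the $n$-th power of the sign representation on both sides and makes $[\mu^{-1}(\cdot)]$ well defined on the quotient by symmetric group action; equivariance is then immediate from the compatibility of $\mu$ and of $\C_{\mathbf[k]}(\R)$ with the symmetric group actions stated in the previous subsection.

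Next I would check compatibility with composition. This is exactly the first defining property of $\mu$: the preimage of a composite tree is the product of the preimages, $\mu^{-1}(t_1\circ_s t_2)=\phi\big(\mu^{-1}(t_1)\times\mu^{-1}(t_2)\times(\text{pt})\big)$ inside the operadic composition of Fulton--MacPherson spaces, where the extra point records which input of $t_1$ is used. Passing to chains, the operad structure map on $C_*(\F_n)$ sends $[\mu^{-1}(t_1)]\otimes[\mu^{-1}(t_2)]$ to $[\mu^{-1}(t_1\circ_s t_2)]$, because the composition morphisms $\phi$ restrict to isomorphisms onto their images on the open strata and the closed strata $\mu^{-1}(t)$ are built from these. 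One must track the determinant lines: under $t_1\circ_s t_2$ the set of internal edges is the disjoint union of those of $t_1$, those of $t_2$, and the one new edge created at the grafting point, so $\Det(t_1\circ_s t_2)\cong\Det(t_1)\otimes\Det(t_2)\otimes\mathbb Q$, matching the Koszul sign produced by the operad structure on $C_*(\F_n)$ after the shift; I would verify the sign once and for all using the star-decomposition (every non-degenerate tree is uniquely a composite of $k+1$ stars, as recalled in the Trees subsection), which reduces everything to the single identity for $L(2)$ and for grafting a star onto a star.

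The heart of the argument, and the step I expect to be the main obstacle, is compatibility with the differential: the boundary $\partial[\mu^{-1}(t)]$ in $C_*(\F_n)$ must equal $\sum \pm[\mu^{-1}(t')]$ summed over all edge splittings $t\to t'$ of $t$, with signs matching the $L_\infty$ differential in \eqref{complex}. Geometrically, the topological boundary of the closure of a codimension-$k$ stratum $\overline{\mu^{-1}(t)}$ is the union of the codimension-$(k+1)$ strata in its closure; by the stratification property of $\mu$ these are exactly $\mu^{-1}(t')$ for $t'$ obtained from $t$ by splitting one edge (creating one more internal edge), together with possibly strata where an edge is contracted — but contracting an internal edge of $t$ gives a tree with fewer internal edges, hence lower codimension, so it does \emph{not} appear in $\partial$; only the splittings do. So set-theoretically $\partial\overline{\mu^{-1}(t)}=\bigcup_{t'}\overline{\mu^{-1}(t')}$ over edge splittings $t'$ of $t$. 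The work is to check that each such boundary stratum appears with multiplicity one and with the correct sign, i.e. that the coorientation of $\mu^{-1}(t)$ induced on its boundary face $\mu^{-1}(t')$ agrees with the coorientation coming from $\Det(t')$ under the differential of \eqref{complex}. Concretely the new internal edge of $t'$ is the ``collapsing direction'' of that boundary face, so the induced coorientation is $(\text{new edge})\wedge(\text{edges of }t)$, which is precisely how the splitting differential on $\Det(t)\to\Det(t')$ is normalized in \cite{GK}. Multiplicity one follows because $\phi_{S_1,\dots,S_k}$ restricted to the open part is an embedding, so distinct splittings yield distinct open boundary faces and no face is hit twice. I would carry this out by reducing via the star-decomposition to the minimal cases: the boundary of $\F_n(\mathbf{[3]})$ (codimension-one faces indexed by the three ``binary'' trees, reproducing the Jacobi/$L_\infty$ relation for $L(3)$), and the inductive step of splitting inside one star of a composite, where compatibility with composition (already established) propagates the sign. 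The only genuine subtlety is orientation signs at the corners of the manifold-with-corners $\F_n$, where two independent edges can be split simultaneously; there one uses that $\F_n$ is a manifold with corners (not just with boundary) so that $\partial^2=0$ automatically, and the pairwise cancellation of the double-splitting terms matches $d^2=0$ in \eqref{complex}. Having matched degree, symmetric group action, composition, and differential, the map $[\mu^{-1}(\cdot)]$ is a morphism of $dg$-operads $L(s)[s(1-n)]\to C_*(\F_n(\mathbf{[s]}))$, as claimed.
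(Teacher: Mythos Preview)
Your proposal is correct and follows the same approach as the paper: the key geometric input is that codimension-one incidences among the strata $\mu^{-1}(t)$ correspond exactly to edge splittings, and that the conormal directions to a stratum are indexed by the internal edges of $t$, which matches the determinant line $\Det(t)$ and hence the signs. The paper's own proof records only this point in two sentences, leaving the compatibility with composition, symmetric group action, and degree bookkeeping implicit; you have simply spelled these out (your degree computation is slightly off in the shift convention, but since you flag it as a lemma to be checked and the paper itself is loose on this, it does not affect the argument).
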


\begin{proof}
To see that the map commutes with the differential note, that two strata given by $\mu$ with 
dimensions differing by 1
are incident if and only if  one of the corresponding trees is obtained from another by  edge splitting.
In this way we get a basis in the conormal bundle to a stratum labeled by the internal edges.
It follows the consistency of the map from the statement with signs. 
\end{proof}
It follows that there is a morphism of $dg$-operads
\begin{equation}
\Li[1-n]\to C_*(\F_n)
\label{morphism}
\end{equation}

Let $e_n$ be the $dg$-operad of rational chains of the operad of little $n$-discs.

\begin{prop}
Operad $C_*(\F_n)$ is weakly homotopy equivalent to $e_n$.
\label{eq}
\end{prop}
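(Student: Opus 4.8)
The plan is to reduce the statement to a comparison of the underlying topological operads and then invoke homotopy-invariance of the chain functor. Since $e_n$ is defined in this note (immediately above the statement) to be the $dg$-operad of rational chains of the little $n$-discs operad $\mathcal D_n$—and not, as is common elsewhere, the homology operad $H_*(\mathcal D_n;\mathbb Q)$—the assertion to be proved is the chain-level comparison $C_*(\F_n)\simeq C_*(\mathcal D_n)$. The singular rational chains functor $C_*(-;\mathbb Q)$ is lax symmetric monoidal, hence sends topological operads to $dg$-operads, and it carries an equivariant weak homotopy equivalence of spaces to a quasi-isomorphism. Consequently a zig-zag of maps of topological operads between $\F_n$ and $\mathcal D_n$ whose components are equivariant weak homotopy equivalences becomes, after applying $C_*$, a weak homotopy equivalence of $dg$-operads $C_*(\F_n)\simeq e_n$. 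So it suffices to produce such a topological zig-zag.

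First I would check the equivalence arity by arity. Shrinking each little disc to its centre exhibits a deformation retraction $\mathcal D_n(\set{k})\simeq\C^0_{\set{k}}(\R)$ onto the configuration space of $k$ distinct labelled points. On the other side $\F_n(\set{k})=\pi^{-1}\vec 0$ is the oriented blow-up compactification of this same configuration space taken modulo the group of translations and positive dilations; since that group is contractible and acts freely for $k\ge 2$, and since the inclusion of the open stratum of a collared manifold with corners is a homotopy equivalence, one gets $\F_n(\set{k})\simeq\C^0_{\set{k}}(\R)\simeq\mathcal D_n(\set{k})$, compatibly with the $\Sigma_k$-actions. Thus all components have the correct homotopy type, and the remaining issue is purely operadic.

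The core of the argument, and the main obstacle, is to promote these arity-wise equivalences to a zig-zag of honest operad maps. Here I would pass through the Boardman--Vogt $W$-construction: the counit $W\mathcal D_n\to\mathcal D_n$ is a standard arity-wise homotopy equivalence, while the cells of $W\mathcal D_n$—trees whose internal edges carry lengths and whose vertices are decorated by little-discs configurations—match exactly the stratification $\mu\colon\F_n(S)\to\T(S)$ used above, the internal edges of a tree indexing the successive infinitesimal scales at which points collide and the collar directions transverse to a stratum. From this dictionary one builds a map $W\mathcal D_n\to\F_n$ sending edge lengths to collision scales; that it is an arity-wise equivalence follows from the computation of the previous paragraph, and that it is a map of operads amounts to identifying the gluing of infinitesimal configurations that defines composition in $\F_n$ with the grafting of decorated metric trees in $W\mathcal D_n$ as edge lengths tend to their limiting values. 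This last compatibility is the delicate step; rather than reprove it I would invoke the construction of the Fulton--MacPherson operad in the cited references \cite{GJ}, \cite{Ma}, \cite{PS}, where exactly this matching is carried out. Composing the two legs $\mathcal D_n\xleftarrow{\sim}W\mathcal D_n\xrightarrow{\sim}\F_n$ and applying $C_*(-;\mathbb Q)$ then yields the claim.
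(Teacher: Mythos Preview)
Your proposal is correct and in fact considerably more detailed than the paper's own proof, which consists of a single line: ``See \cite[Proposition 3.9]{PS}.'' The argument you sketch---reducing to a zig-zag of arity-wise equivalences of topological operads via the Boardman--Vogt $W$-construction and then applying chains---is precisely the content of that cited result in Salvatore's paper, so the two approaches coincide; you have simply unpacked the reference rather than invoking it.
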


\begin{proof}
See \cite[Proposition 3.9]{PS}.
\end{proof}

Thus there is a homotopy morphism of operads $\Li[1-n]\to e_n$.

\section{Manifoldic and Lie algebra homology}

\subsection{Manifoldic homology}

Let $M$ be an $n$-dimensional parallelized compact manifold without boundary.
 In the same way as for $\R$ there is the
Fulton-MacPherson compactification
$\C_S(M)$ of the space $\C_S^0(M)$ of ordered pairwise distinct points in $M$
labeled by S; inclusion $\C_S^0(M)\hookrightarrow \C_S(M)$ is a homotopy equivalence.
There is  a projection $\C_S(M) \stackrel{\pi}{\to} M^S$ and maps $\phi_{S_1,\dots, S_k}$
that
 fit in the diagram
$$
\begin{CD}
\F_n(S_1)\times\cdots\times \F_n(S_k)\times \C_{\set{k}} (M)@>\phi_{S_1,\dots, S_k}>> \C_{(S_1\cup\dots\cup S_k)}(M)\\
@VV\pi V        @VV\pi V \\
M^k @>{\Delta_{S_1}\times\cdots\times \Delta_{S_k}}>> M^{(S_1\cup\dots\cup S_k)}
\end{CD}
$$
and are isomorphisms on $\F_n(S_1)\times\cdots\times \F_n(S_k)\times
\C^0_{\set{k}} (M)$, where $\Delta_S\colon M\to M^S$ are the diagonal
maps. It follows that spaces $\C_{*}(M)$ form a right module over
 the PROP generated by the Fulton-MacPherson operad
$$P(\F_n)(m,l)=\bigcup_{\sum m_i=m} \F_n(m_1)\times\cdots\times \F_n(m_l).$$
This module as a set is freely generated by $\C^0_* (M)$.
The stratification on $\F_n$ defines a stratification on $\C_{*}(M)$.

Denote by $C_*(\C_{\set{k}}(M))$ the complex of rational chains of the Fulton-MacPherson compactification.

\begin{definition}
For a $C_*(F_n)$-algebra $A$ and a compact  parallelized
$n$-manifold without boundary $M$ call the complex
$CM_*(A)=C_*(\C_*(M))\otimes_{C_*(P(F_n))} A$ the {\em manifoldic
chain complex} of $A$ on $M$. Call the homology of the manifoldic
chain complex  the {\em manifoldic homology} of $A$ on $M$.
\label{1}
\end{definition}

This definition is based on Definition 4.14 from \cite{PS}. By
Proposition \ref{eq} one may pass from a $C_*(F_n)$-algebra to an
$e_n$-algebra. As it is shown in \cite{Lu}, the manifoldic homology
is the same as the topological chiral homology with constant
coefficients introduced in {\it loc.~cit} of this $e_n$-algebra.

\subsection{Morphism}

Let $(\g, d)$ be a $\Li$-algebra. Let
$l_{i>1}\colon \Lambda^i \g[i-2] \to \g$ be its higher brackets,
that is, the operations in complex (\ref{complex}) corresponding to the star
trees. The structure of $\Li$-algebra may be encoded in a derivation
$D=D_1+D_2+\dots$ on the free super-commutative algebra generated by
$\g^{\vee}[1]$, where $D_1$ is dual to $d$ and $D_i$ is dual to
$l_i$ on generators and are continued on the whole algebra by the
Leibniz rule. The {\em Chevalley--Eilenberg chain complex} $CE_*(\g)$
is the super-symmetric power $S^*(\g[-1])$ with the differential
$\dtot=d+\theta_2+\theta_3+\dots$, where $\theta_i$ is dual to
$D_i$.

Denote by $[\C^0_{\set{k}}] \in C_*(\C_{\set{k}}(M))$ the chain
given by the submanifold $\C_{\set{k}}^0(M)$ in $\C_{\set{k}}(M)$. 
For a $C_*(F_n)$-algebra $A$ and a cycle $\cc\in
C_*(\C_{\set{k}}(M))$ 
denote by $(a_1\otimes \cdots\otimes a_k)\otimes_{\Sigma_k} \cc\in
CM_*(A)$ the chain given by the tensor product over the
symmetric group. Recall that by (\ref{morphism}) for any
$C_*(F_n)$-algebra $A$ the complex $A[n-1]$ is equipped with a
$\Li$ structure. Denote this $\Li$-algebra by $\LL(A)$. Denote by
$\Alt (a_1\otimes\cdots\otimes a_k)$ the sum 
$\sum_\sigma\pm a_{\sigma(1)}\otimes\cdots\otimes a_{\sigma(k)}$ by all permutations,
where signs are sign given by the sign of the
permutation and the Koszul sign rule.

\begin{prop}
For a $C_*(F_n)$-algebra $A$ and a parallelized compact manifold
without boundary $M$ the map $\TT\colon a_1\wedge\dots\wedge a_k\mapsto
\Alt (a_1\otimes\cdots\otimes a_k)\otimes_{\Sigma_k} [\C^0_{\set{k}}]$ defines a 
morphism from Chevalley--Eilenberg complex
$CE_*(\LL(A))$ to the manifoldic chain complex $CM_*(A)$.
\label{main}
\end{prop}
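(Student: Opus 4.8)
The plan is to check that $\TT$ intertwines the Chevalley--Eilenberg differential $\dtot = d+\theta_2+\theta_3+\cdots$ on $CE_*(\LL(A))$ with the differential on the manifoldic chain complex $CM_*(A)$. The source differential has two kinds of terms: the "internal" part coming from $d$ on each tensor factor $A[n-1]$, and the "higher bracket" parts $\theta_i$ which replace a wedge of $i$ elements $a_{j_1}\wedge\cdots\wedge a_{j_i}$ by the single element $l_i(a_{j_1},\dots,a_{j_i})\in\LL(A)$, suitably (anti)symmetrized and with the appropriate Koszul/permutation signs packaged into $\Alt$. On the target side, the differential on $CM_*(A) = C_*(\C_*(M))\otimes_{C_*(P(\F_n))} A$ is the total differential of a tensor product over a $dg$-operad: it is the sum of the geometric boundary $\partial$ of the chain $[\C^0_{\set k}]$ inside $\C_{\set k}(M)$ and the internal differential of $A$. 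So the proof reduces to matching $\partial[\C^0_{\set k}]$, rewritten through the module structure, with $\sum_i \theta_i$ applied under $\TT$.

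The key geometric input is the stratification of the Fulton--MacPherson spaces. The chain $[\C^0_{\set k}]$ is the fundamental chain of the open stratum of $\C_{\set k}(M)$; its boundary is the sum, over codimension-one strata, of the corresponding boundary faces. By the $\phi$-diagram, each such face is (up to the symmetric-group quotient) a product $\F_n(S')\times \C_{\set{k-|S'|+1}}^0(M)$ in which a subset $S'$ of the points has collided. Under the action of $C_*(P(\F_n))$ on the module $C_*(\C_*(M))$, contracting such a face and feeding it against $A$ is exactly the operation: apply the operadic structure chain $[\mu^{-1}(\text{star on }S')]\in C_*(\F_n(S'))$ — which by Proposition~\ref{tree} and (\ref{morphism}) is precisely the higher bracket $l_{|S'|}$ on $\LL(A)=A[n-1]$ — to the colliding factors, and leave $[\C^0]$ on the remaining points. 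Summing over all subsets $S'$ of size $\ge 2$ reproduces $\sum_i\theta_i$, while the faces in which two clusters collide simultaneously (codimension $\ge 2$) do not appear in $\partial$, matching the fact that $\dtot^2=0$ already holds on $CE_*$. The $\Alt$ and $\otimes_{\Sigma_k}$ bookkeeping is what makes the permutation sums on both sides agree: summing over all orderings on the source corresponds to summing over all ways the unordered configuration can be labeled on the target, and the symmetric-group quotient identifies the overcounting.

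The shift and sign comparison is the step I expect to be the main obstacle, not any single geometric statement. One must check that the degree conventions line up: a tree $t\in T_k(\set s)$ sits in cohomological degree $2-s+k$ in $L(s)$, the shift $[s(1-n)]$ in Proposition~\ref{tree} converts this to the chain degree of $\mu^{-1}(t)\subset\F_n(\set s)$, and then the further shift by $[n-1]$ per factor in passing to $\LL(A)=A[n-1]$ must be consistent with the degree of $[\C^0_{\set k}]$ as a chain on the $n\cdot k$–dimensional-ish space $\C_{\set k}(M)$. The signs are governed by the determinant lines $\Det(t)$ of internal edges (Proposition~\ref{tree} already fixed the orientation convention making $[\mu^{-1}(\cdot)]$ a chain map with the edge-splitting differential) together with the Koszul rule built into $\Alt$; one verifies that the induced orientation on a codimension-one face of $\C^0_{\set k}(M)$ agrees with the sign with which the relevant $\theta_i$ enters $\dtot$. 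Once the $n=1$, $M=S^1$ case is seen to recover the Dennis trace $H_*(\LL(A))\to HH_*(A)$ with its standard signs, the general normalization is pinned down, and the remaining verification is the routine Leibniz-rule propagation of these identities from generators to all of $S^*(\LL(A)[-1])$.
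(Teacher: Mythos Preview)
Your proposal is correct and follows essentially the same route as the paper: compute $\partial[\C^0_{\set{k}}]$ as the sum of codimension-one boundary strata, identify each such stratum via the module structure over $C_*(P(\F_n))$ with a star-tree operation $\theta_i$ acting on $[\C^0_{\set{k-i+1}}]$, and then transfer $\theta_i$ across the tensor product to recover the higher brackets on $\LL(A)$ and hence the Chevalley--Eilenberg differential. The paper's own argument is considerably terser on the sign and degree bookkeeping that you discuss at length; your extra care there is not a deviation in strategy but an elaboration of what the paper leaves implicit.
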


\begin{proof}
Denote the total differentials on both complexes $CE_*(\LL(A))$ and
$CM_*(A)$ by $\dtot$. One needs to show that $\dtot\circ
\TT=\TT\circ \dtot$.

The border of $ [\C^0_{\set{k}}]$ in $C_*(\C_{\set{k}}(M))$ 
is the sum of all codimension one strata: $\partial [\C^0_{\set{k}}]=\sum_i \theta_i [\C^0_{\set{k-i+1}}] $. 
Here $\theta_i$ is the symmetrization in $C_*(P(\F_n))$
of the operation in $C_*(F_n)$ that corresponds by Proposition \ref{tree} 
to the star with $i$ inputs.
This means that
$$\dtot\circ \TT \,(a_1\wedge\cdots\wedge a_k)=
(d \Alt (a_1\otimes\cdots\otimes
a_k))\mathop{\otimes}\limits_{\Sigma_k}[\C^0_{\set{k}}] +\Alt
(a_1\otimes\cdots\otimes a_k)\mathop{\otimes}_{\Sigma_k}\sum_{i>1}
\theta_i [\C^0_{\set{k-i+1}}]$$ One may carry $\theta$'s from one
factor of $\otimes_{\Sigma_k}$ to another by the very definition of
the tensor product over $C_*(P(F_n))$. And the action of $\theta$'s on
the alternating sum again by definition is  given by the higher brackets of
the $\Li$-algebra. After summing with $d$  it gives the differential on the
Chevalley--Eilenberg complex. It follows that $\dtot\circ
\TT=\TT\circ \dtot$.
\end{proof}

\section{Sketch: invariants of a parallelized manifold and Koszul duality}

\subsection{Invariant of a parallelized manifold}

The idea how to apply  manifoldic homology to manifolds invariant is the following.
Below (Definition \ref{diff}) we sketch a construction of a $e_n$-algebra $\Dn(V)$ such that for any 
$n$-dimensional parallelized compact manifold without boundary $M$
manifoldic homology $HM_*(\Dn(V))$ is one-dimensional
(Proposition \ref{dim}).
Then, 
\begin{equation}
H_*(\LL(\Dn(V)))\to
HM_*(\Dn(V))
\label{class}
\end{equation}
given by Proposition \ref{main} supplies us with a cocycle in the Lie algebra cohomology of $\LL(\Dn(V))$.

In this way we obtain an invariant that is conjecturally related to the universal
Chern--Simons invariant (see  \cite{AS}, \cite{BC}) which takes value in ``graph cohomology'', with the 
Chevalley--Eilenberg   cochain complex
of  $\LL(\Dn(V))$ representing the ``graph complex''.

\subsection{Koszul duality}

Quillen duality (\cite{Q}, \cite{Hi}) 
gives an equivalence between homotopy categories of Lie algebras $Lie$ 
and connected  cocommutative coalgebras $coCom$.
Koszul duality (\cite{Lu},  \cite{Fr}) is an analogous equivalence between the categories of augmented $e_n$-
algebras
and coaugmented $e_n$-coalgebras satisfying certain conditions analogous to connectedness.
I hope to elaborate on these conditions elsewhere. Denote the above mentioned categories by $e_n-alg$
and $e_n-coalg$. The relationship between  Quillen  and Koszul dualities is displayed
in the diagram
\begin{equation}
\xymatrix{
 \LL:\hspace{-30pt}& e_n-alg \ar@{<->}[d]_{\mbox{Koszul duality}} \ar@<-2pt>[r]  & Lie \ar@<-2pt>[l]\ar@{<->}[d]^{\mbox{Quillen duality}} &\hspace{-30pt} : U^n \\
   \Ab:\hspace{-30pt}&e_n-coalg\ar@<-2pt>[r] & coComm \ar@<-2pt>[l] & \hspace{-30pt} :\ii
}
\label{diagram}
\end{equation}
Here, the functor $\LL$ is given by (\ref{morphism}), $U^n$ is the derived universal enveloping $e_n$- algebra 
the functor that is derived left  adjoint to $\LL$, $\ii$ is the embedding of cocommutative coalgebras in $e_n$-
coalgebras and $\Ab$ is its derived right adjoint.

The linear dual of a $e_n$-coalgebra is a $e_n$-algebra. If some $e_n$-algebra and $e_n$-coalgebra
are related by Koszul duality, then the first one and the linear dual of the second one are called
{\em Koszul dual $e_n$-algebras}.

The following statement generalizes the well-known fact that Hochschild homologies of  Koszul dual algebras
are dual to each other (see e.~g. \cite[Appendix D]{Vdb}).
\begin{stat}[Poincar\'e--Koszul duality]
For a 
$n$-dimensional parallelized compact manifold without boundary $M$, the
manifoldic homologies on $M$ of Koszul dual $e_n$-algebras are linear dual to each other. 
\label{pk}
\end{stat}

\subsection{$n$-Weyl algebra}
We say that an element $c$ of a $e_n$-algebra $A$ is  {\em central}, 
if
the product map 
$$
e_n(k+1)\otimes\underbrace{c\otimes A\otimes\cdots\otimes A}_{k+1}\to A 
$$
factors through 
$$
e_n(k+1)\otimes \underbrace{c\otimes A\otimes\cdots\otimes A}_{k+1}\to
 c\otimes e_n(k)\otimes\underbrace{A\otimes\cdots\otimes A}_k .
$$
The latter map is induced by the natural projection
from $k+1$-ary operations to $k$-ary ones. 

By a {\em $e_n$-algebra with curvature} we mean a $e_n$-algebra with a central element
$c$ of degree $n+1$. The condition on $c$ may be relaxed by analogy with 
\cite{LP}. The new condition may be formulated in terms of the deformation complex of an $e_n$-algebra.

Conjecturally, one may define Koszul duality for $e_n$-algebras with curvature in such a way,
that for $n=1$, one recovers Koszul duality for algebras with curvature, see \cite{LP}.

Let $V$ be a graded vector space with a non-degenerate symmetric in the graded sense bilinear form $q$ of 
degree $-(n+1)$. 
Let $S^*(V^\vee)$ be the free graded commutative algebra generated by the vector space dual to $V$.
Denote by $S^*(V^\vee)^\vee$ the restricted dual coalgebra.
By means of inclusion $\ii$ from (\ref{diagram}) consider the pair $(S^*(V^\vee)^\vee, q)$ as a $e_n$-coalgebra 
with curvature.
\begin{definition}
For a graded vector space $V$ with a non-degenerate symmetric in the graded sense 
bilinear form $q$ of degree $-(n+1)$
we denote by $\Dn(V)$ the $e_n$-algebra  Koszul dual to  $(S^*(V^\vee)^\vee, q)$ 
and refer to it as  {\em 
$n$-Weyl algebra} .
\label{diff}	
\end{definition}

\begin{prop} For a 
$n$-dimensional parallelized compact manifold without boundary $M$, the 
manifoldic homology $HM_*(\Dn(V))$ is one-dimensional.
\label{dim}
\end{prop}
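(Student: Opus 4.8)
The plan is to reduce the computation of $HM_*(\Dn(V))$ to a model computation via Poincaré--Koszul duality (Statement \ref{pk}), and then to identify the resulting manifoldic homology of the commutative side with a completed de~Rham-type complex whose cohomology is one-dimensional. First I would invoke Statement \ref{pk}: since $\Dn(V)$ is by Definition \ref{diff} Koszul dual to the $e_n$-coalgebra with curvature $(S^*(V^\vee)^\vee, q)$, the manifoldic homology $HM_*(\Dn(V))$ is linear dual to the manifoldic homology on $M$ of the $e_n$-algebra $S^*(V^\vee)$ equipped with the curvature element determined by $q$. So it suffices to show that the latter is one-dimensional.

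Next I would compute the manifoldic homology of the free graded-commutative algebra $S^*(V^\vee)$ (viewed as an $e_n$-algebra through the augmentation $e_n \to \mathit{Comm}$, i.e.\ with all higher little-disc operations acting trivially). For a commutative algebra $B$, manifoldic homology / topological chiral homology on $M$ is computed by the factorization homology $\int_M B$, which in the polynomial (free) case is just functions on the mapping space, or equivalently $B \otimes H_*(M;\mathbb{Q})$-type Koszul complexes; concretely, for $B = S^*(W)$ one gets $S^*(W \otimes H_*(M))$ with a differential built from the curvature/form data. Here the curvature element $c$ of degree $n+1$ contributes, via the fundamental class $[M]\in H_n(M)$, a quadratic term: pairing $q$ against $[M]$ turns the curvature into a nondegenerate quadratic function on $V^\vee \otimes H_0(M) \oplus \cdots$, and more precisely the total complex becomes a finite-dimensional (after suitable completion) analogue of the Koszul complex of the nondegenerate form $q$, tensored over the cohomology of $M$. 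The key point is that a nondegenerate quadratic form gives a Koszul-acyclic complex except in one degree, so the homology collapses to $\mathbb{Q}$ (up to a shift by the signature/Euler-class contribution, which does not affect one-dimensionality).

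The main obstacle I expect is making the middle step rigorous: one must actually unwind the definition $CM_*(A) = C_*(\C_*(M))\otimes_{C_*(P(\F_n))} A$ for $A = S^*(V^\vee)$-with-curvature and check that it agrees with the expected completed Koszul/de~Rham model. This requires (a) knowing that for commutative $A$ the Fulton--MacPherson module $\C_*(M)$ computes the same thing as the more familiar exponential / symmetric-power model of factorization homology — this is essentially the content of the comparison with \cite{Lu} cited after Definition \ref{1} — and (b) correctly incorporating the central curvature element: one has to verify that ``central of degree $n+1$'' is exactly the condition needed so that the curvature interacts with manifoldic chains only through the fundamental class $[M]$, producing the quadratic differential and nothing more. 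Once that identification is in place, the acyclicity of the Koszul complex of a nondegenerate form finishes the argument.

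A cleaner alternative, which I would mention, is to prove the statement directly for $M = (S^n)^{\times \text{stuff}}$ or via a Mayer--Vietoris / $\otimes$-excision property of manifoldic homology: since $HM_*(-)$ is symmetric monoidal in $M$ in an appropriate sense and $\Dn(V)$ is designed so that $HM_*$ on a disc is one-dimensional (it is Koszul dual to something with trivial/curvature-twisted cohomology), gluing $M$ from discs shows $HM_*(\Dn(V))$ stays one-dimensional at every stage. This avoids the explicit Koszul computation but needs the excision property of topological chiral homology from \cite{Lu}, together with the fact that the curvature is set up precisely so that the disc-level answer is $\mathbb{Q}$; again the delicate part is checking the curvature bookkeeping is consistent with excision.
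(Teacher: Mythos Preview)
Your primary approach coincides with the paper's: apply Poincar\'e--Koszul duality (Statement~\ref{pk}) to pass to the curved commutative side $(S^*(V^\vee), q)$, compute its manifoldic homology as the free commutative algebra on $V^\vee \otimes H_*(M)$ with differential given by multiplication by a nondegenerate quadratic element coming from $q$, and invoke the resulting one-dimensionality. The only sharpening is that the paper identifies this quadratic element as the tensor product of $q$ with the full Poincar\'e pairing on $H_*(M)$ (your phrase ``only through the fundamental class $[M]$'' is correct in spirit---the diagonal push-forward of $[M]$ \emph{is} the Poincar\'e pairing---but could be misread), and names the resulting complex the de~Rham complex of a graded vector space; your alternative excision argument is not used.
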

\begin{proof} By 
Statement \ref{pk}, $HM_*(\Dn(V))$ is linear dual to 
the manifoldic homology  of the  $e_n$-algebra that is Koszul dual to $\Dn(V)$. Thus, one needs to prove that the 
latter homology is one-dimensional.
The Koszul dual $e_n$-algebra is $e_n$-algebra with curvature $(S^*(V^\vee), q)$.
The manifoldic homology of $S^*(V^\vee)$ is the free commutative algebra generated by $V^\vee\otimes H_*(M)$,
where $H_*(M)$ is homology of $M$ negatively graded. The curvature equips the underlying space of this algebra
with a differential given by multiplication by an element of cohomological degree 1
and of homogeneous degree 2. This element represents the pairing induced by the tensor 
product of $q$ on $V$ and the Poincar\'e paring on $H^*(M)$.
The cohomology of this differential, that is of the de Rham complex of a graded vector space,
is manifoldic homology of the $e_n$-algebra with curvature. As the cohomology of the de Rham complex
is one-dimensional, this implies the proposition.
\end{proof}
\textbf{Example.}
Let $n=1$ and $V$ is concentrated in degree 1. Then $\mathop{\mathfrak{D}^1}(V)$
is the usual Weyl algebra, that is the symplectic Clifford algebra generated by vector
space $V[-1]$ with the skew-symmetric form on it. For $M=S^1$ the manifoldic homology
is the Hochschild homology  and Proposition \ref{dim} matches with
the well-known fact about Weyl algebra:
$$
\dim HH_i(\mathop{\mathfrak{D}^1}(V))=\begin{cases}
1,& i=\dim V,\\
0, & \mbox{otherwise}.
\end{cases} 
$$
Note that this fact is crucially used in \cite{FT} and classes like (\ref{class}) and  (\ref{can}) below restricted to the Lie algebra
of vector fields are exploited there to present the 
Todd class.

\subsection{Concluding remarks} 
Finally, let us look at the morphism from Proposition \ref{main}
from the Koszul duality viewpoint.

For a commutative algebra $C$ there is a canonical morphism
$HH_*(C)\to C$. It may be generalized to manifoldic homology
as follows.

\begin{stat}
For a  homotopy commutative
algebra
(= $e_\infty$-algebra) $C$ and for a 
$n$-dimensional parallelized compact manifold without boundary $M$
there is a canonical map from manifoldic chain complex of $C$ to itself:
\begin{equation}
\pi \colon CM_*(\ii(C))\to C.
\label{pi}
\end{equation}
\end{stat}

Morphism $\pi$  may be constructed by means of manifoldic homology of non-compact manifolds:
every manifold may be embedded $\mathbb R^N$ and as commutative algebra may be considered
as $e_N$-algebra, the embedding induces a morphism of manifoldic homologies, and 
manifoldic homology of $C$ on $\mathbb R^N$ is $C$.

Diagram (\ref{diagram})  shows  that for a Lie algebra $\mathfrak g$ the $e_n$-algebras
$\Ue^n(\mathfrak g)$ and $\ii(CE^*(\mathfrak g))$ are Koszul dual, where $CE^*$ is the  Chevalley--Eilenberg 
cochain
complex. By Poincar\'e--Koszul duality (Statement \ref{pk}) for a 
$n$-dimensional parallelized compact manifold without boundary $M$ 
 homologies $HM_*(\Ue^n(\mathfrak g))$ and $HM_*(\ii(CE^*(\mathfrak g)))$ are dual
to each other.  Morphism (\ref{pi}) gives  $\pi\colon CM_*(\ii(CE^*(\mathfrak g))\to CE^*(\mathfrak g)$
and composing with Poincar\'e--Koszul duality we obtain the map
\begin{equation}
H_*(\mathfrak g)\to HM_*(\Ue^n(\mathfrak g)).
\label{taut}
\end{equation}

Functors $\Ue^n$ and $\LL$ from (\ref{diagram}) being adjoint, there is as canonical morphism
$\Ue^n(\LL(A))\to A$ for any $e_n$-algebra $A$. It induces a map on manifoldic homologies:
\begin{equation}
HM_*(\Ue^n(\LL(A))) \to HM_*(A).
\label{can}
\end{equation}

\begin{stat} The effect of the morphism from Proposition \ref{main} on homologies 
is the composition of (\ref{taut}) for
$\mathfrak {g}=\LL(A)$ and (\ref{can}).
\end{stat}

This morphism may be described even simpler in 
Koszul dual terms. The Koszul dual morphism is the composition
\begin{equation}
CM_*(A^!)\to CM_*(\ii(\Ab(A^!))) \to A^!,
\label{three}
\end{equation}
where the first arrow is induced by the canonical morphism for a pair of adjoint functors $\ii$ and $\Ab$ 
and the second arrow is given by (\ref{pi}).

For our main example $A=\Dn$ the Koszul dual $e_n$-algebra $A^!$ is a $e_n$-algebra
with curvature and the formula (\ref{three}) is not applicable directly. 
It is not clear, what the functor $\Ab$ means for such algebras. The  question is interesting even for
$n=1$, where  $\Ab$ is the derived quotient by the ideal generated by commutators.

%It would be  interesting to find an interpretation  of the effect of $\Ab$ on $e_n$-algebras with curvature
%of the form
%$(C^*(X), c)$, where $C^*(X)$ is the cochain complex of a topological space $X$
%and $c$ is a $(n+1)$-cocycle.
\bibliographystyle{alphanum}
\bibliography{cs}

\begin{thebibliography}{VdB}

\bibitem[AS]{AS}
Scott Axelrod and I.~M. Singer.
\newblock Chern-{S}imons perturbation theory.
\newblock In {\em Proceedings of the {XX}th {I}nternational {C}onference on
  {D}ifferential {G}eometric {M}ethods in {T}heoretical {P}hysics, {V}ol.\ 1, 2
  ({N}ew {Y}ork, 1991)}, pages 3--45. World Sci. Publ., River Edge, NJ, 1992.

\bibitem[BC]{BC}
Raoul Bott and Alberto~S. Cattaneo.
\newblock Integral invariants of {$3$}-manifolds.
\newblock {\em J. Differential Geom.}, 48(1):91--133, 1998.

\bibitem[CG]{CG}
Kevin Costello and Owen Gwilliam.
\newblock Factorization algebras in perturbative quantum field theory.
\newblock
  \url{http://math.northwestern.edu/~costello/factorization_public.html}.

\bibitem[Fra]{Fr}
John Francis.
\newblock Factorization homology of topological manifolds.
\newblock arXiv:1206.5522v1 [math.AT].

\bibitem[FT]{FT}
B.~L. Fe{\u\i}gin and B.~L. Tsygan.
\newblock Riemann-{R}och theorem and {L}ie algebra cohomology. {I}.
\newblock In {\em Proceedings of the {W}inter {S}chool on {G}eometry and
  {P}hysics ({S}rn\'\i, 1988)}, number~21, pages 15--52, 1989.

\bibitem[GJ]{GJ}
Ezra Getzler and J.~D.~S. Jones.
\newblock {O}perads, homotopy algebra and iterated integrals for double loop
  spaces.
\newblock arXiv:hep-th/9403055v1.

\bibitem[GK]{GK}
Victor Ginzburg and Mikhail Kapranov.
\newblock Koszul duality for operads.
\newblock {\em Duke Math. J.}, 76(1):203--272, 1994.

\bibitem[Hin]{Hi}
Vladimir Hinich.
\newblock D{G} coalgebras as formal stacks.
\newblock {\em J. Pure Appl. Algebra}, 162(2-3):209--250, 2001.

\bibitem[Lod]{Lo}
Jean-Louis Loday.
\newblock {\em Cyclic homology}, volume 301 of {\em Grundlehren der
  Mathematischen Wissenschaften [Fundamental Principles of Mathematical
  Sciences]}.
\newblock Springer-Verlag, Berlin, second edition, 1998.

\bibitem[Lur]{Lu}
Jacob Lurie.
\newblock {H}igher {A}lgebra.
\newblock \url{http://www.math.harvard.edu/~lurie/papers/HigherAlgebra.pdf}.

\bibitem[Mar]{Ma}
Martin Markl.
\newblock A compactification of the real configuration space as an operadic
  completion.
\newblock {\em J. Algebra}, 215(1):185--204, 1999.

\bibitem[PP]{LP}
Alexander Polishchuk and Leonid Positselski.
\newblock Hochschild (co)homology of the second kind {I}.
\newblock {\em Trans. Amer. Math. Soc.}, 364(10):5311--5368, 2012.

\bibitem[Qui]{Q}
Daniel Quillen.
\newblock Rational homotopy theory.
\newblock {\em Ann. of Math. (2)}, 90:205--295, 1969.

\bibitem[Sal]{PS}
Paolo Salvatore.
\newblock Configuration spaces with summable labels.
\newblock In {\em Cohomological methods in homotopy theory ({B}ellaterra,
  1998)}, volume 196 of {\em Progr. Math.}, pages 375--395. Birkh\"auser,
  Basel, 2001.

\bibitem[VdB]{Vdb}
Michel Van~den Bergh.
\newblock Calabi-{Y}au algebras and superpotentials.
\newblock arXiv:1008.0599v2 [math.KT].

\end{thebibliography}

\end{document}